\documentclass[a4paper,10pt]{amsart}
\usepackage{etex}

\usepackage[latin1]{inputenc}
\usepackage{amsmath, amsthm, amssymb, comment}
\usepackage{amscd}
\usepackage[dvips]{graphicx}
\usepackage[all]{xy}
\usepackage{enumitem}
\usepackage{hyperref}
\hypersetup{
    colorlinks,
    linkcolor={red!50!black},
    citecolor={blue!50!black},
    urlcolor={blue!80!black}
}

\usepackage{caption}
\usepackage{subcaption}
\usepackage[rightcaption]{sidecap}

\usepackage[usenames,dvipsnames]{pstricks}
\usepackage{epsfig}
\usepackage{pst-grad} 
\usepackage{pst-plot} 
\usepackage{pstricks-add}
\usepackage{pst-solides3d}

\usepackage[margin=2.5cm]{geometry}

\usepackage{color}

\newtheorem{theorem}{Theorem}[section]
\newtheorem{corollary}[theorem]{Corollary}

\newtheorem{definition}[theorem]{Definition}
\newtheorem{lemma}[theorem]{Lemma}

\newtheorem*{theorem*}{Theorem}
\newtheorem*{proposition*}{Proposition}
\newtheorem*{definition*}{Definition}
\newtheorem*{lemma*}{Lemma}
\newtheorem*{claim*}{Claim}
\newtheorem*{corollary*}{Corollary}

\theoremstyle{definition}

\theoremstyle{remark}
\newtheorem{rem}[theorem]{Remark}
\newtheorem*{rem*}{Remark}



\newcommand{\wt}[1]{\widetilde{#1}}

\newcommand\R{\mathbb R}
\newcommand\F{\mathbb F}
\newcommand\Z{\mathbb Z}


\DeclareMathOperator{\id}{Id}


\newcommand\flot{ g^{t} }
\newcommand\hflot{ \tilde{ g}^t }



\newcommand\orb{ \mathcal O }
\newcommand\leafs{ \mathcal L ^{s} }

\newcommand\fs{\mathcal F^{s} }
\newcommand\hfs{\widetilde{\mathcal F}^{s} }
\newcommand\fu{\mathcal F^{u} }
\newcommand\hfu{\widetilde{\mathcal F}^{u} }


\newcommand{\FH}{\mathcal{FH}}


%
%
%



\title[Self orbit equivalences of Anosov flows]{A note on self orbit equivalences of Anosov flows and bundles with fiberwise Anosov flows}
\author{Thomas Barthelm\'e}
\address{Queen's University, Kingston, Ontario}
\email{thomas.barthelme@queensu.ca}
\urladdr{sites.google.com/site/thomasbarthelme}

\author{Andrey Gogolev}
 \thanks{The second author was partially supported by Simons grant 427063.}
\address{Binghamton University (SUNY), Binghamton, NY}
\curraddr{Ohio State University, Columbus, OH}
\email{gogolyev.1@osu.edu}

\begin{document}

\begin{abstract}
We show that a self orbit equivalence of a transitive Anosov flow on a $3$-manifold which is homotopic to identity has to either preserve every orbit or the Anosov flow is $\R$-covered and the orbit equivalence has to be of a specific type.
This result shows that one can remove a relatively unnatural assumption in a result of Farrell and Gogolev \cite{FG16} about the topological rigidity of bundles supporting a fiberwise Anosov flow when the fiber is $3$-dimensional.  
\end{abstract}

\maketitle

\section{Introduction}

In this note we give a complete description of self orbit equivalences of Anosov flows in dimension~$3$ that are homotopic to identity. Before stating the theorem, recall that an Anosov flow is called $\R$-covered if the leaf space of its weak stable (or equivalently weak unstable) foliation is homeomorphic to $\R$ (in particular, suspensions and geodesic flows are $\R$-covered).

\begin{theorem}\label{prop_orbit_eq}
 Let $M$ be a $3$-manifold and $\flot \colon M \rightarrow M$ be a transitive Anosov flow. Let $h \colon M \rightarrow M$ be a self orbit equivalence of $\flot$ which is homotopic to identity. Then we have:
\begin{enumerate}
 \item\label{it_notR_cov} If $\flot$ is not $\R$-covered, then $h$ preserves every orbit of $\flot$.
 \item\label{it_suspension_or_geod} If $\flot$ is orbit equivalent to a suspension of an Anosov diffeomorphism or the geodesic flow of a hyperbolic surface or orbifold, then $h$ preserves every orbit of $\flot$.
 \item\label{it_notTransversely_orient} If $\flot$ is $\R$-covered and its weak stable or unstable foliation is \emph{not} transversely orientable, then $h$ preserves every orbit of $\flot$.
 \item\label{it_Rcovered_trans} If $\flot$ is $\R$-covered but not one of the above cases, then there exist $\eta \colon M \rightarrow M$ (independent of $h$) a self orbit equivalence of $\flot$ which is homotopic to identity, $\eta \neq \id$, and $k \in \Z$ (depending on $h$) such that $h \circ \eta^k$ preserves every orbit of $\flot$. Moreover, there are two possible sub-cases
 \begin{enumerate}
  \item\label{subitem_infinite_order} Either, for all $i$, $\eta^i \neq \id$, in which case $k$ above is unique,
  \item\label{subitem_finite_order} Or, there exists a $q>0$ such that $\eta^i\neq \id$, for all $i=1,\ldots, q-1$ and $\eta^{q} = \id$ in which case $\flot$ is orbit equivalent to the $q$-cover of a geodesic flow on a hyperbolic surface or orbifold. In this case, the number $k$ is unique modulo $q$.
 \end{enumerate}

\end{enumerate}
\end{theorem}

\begin{rem}
If $h$ preserves every orbit of $g^t$ then it follows that $h$ is isotopic to identity via a path of self orbit equivalences which preserve orbits as well, see~\cite[Lemma 4.11]{FG16}.
\end{rem}

We now explain how the above description can be used to improve a topological rigidity result of Farrell and Gogolev which we proceed to describe.

Recall that a smooth locally trivial fiber bundle $p\colon E\to X$ whose fiber $M$ is a closed smooth manifold is called {\it Anosov} if the total space $E$ can be equipped with a continuous flow which preserves the fibers $M_x=p^{-1}(x)$ and whose restrictions to the fibers $g_x^t\colon M_x\to M_x$, $x\in X$, are $C^\infty$ Anosov flows which depend continuously on $x$ in the $C^1$-topology.
Also recall that $p\colon E\to X$ is called {\it topologically trivial (homotopically trivial) } if there exist a continuous trivialization $r\colon E\to M$ such that the restrictions $r_x\colon M_x\to M$ are homeomorphisms (homotopy equivalences) for each $x\in X$.

In \cite{FG16}, Farrell and Gogolev prove, among other results, the following rigidity result.
\begin{theorem*}[Theorem 4.2 of \cite{FG16}]
 Let $X$ be a closed manifold or a finite simplicial complex and let $p\colon E \rightarrow X$ be a homotopically trivial Anosov bundle whose fiberwise flows $g_x^t \colon M_x \rightarrow M_x$, $x\in X$, satisfy the following conditions:
\begin{enumerate}[label=\alph*)]
 \item The flows $g_x^t \colon M_x \rightarrow M_x$ are transitive Anosov flows;
 \item \label{assumption2} The flows $g_x^t \colon M_x \rightarrow M_x$ do not have freely homotopic orbits.
\end{enumerate}
Then the bundle $p\colon E \rightarrow X$ is topologically trivial.
\end{theorem*}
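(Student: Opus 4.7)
The plan is to upgrade the given homotopical trivialization $r\colon E\to M$ to a topological one by assembling local orbit equivalences produced by structural stability of Anosov flows, and then using assumption~(b) to kill the resulting gluing ambiguity.

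First I would cover $X$ by open sets $U_\alpha$ on which the smooth fiber bundle $p$ trivializes as a bundle, $p^{-1}(U_\alpha)\cong U_\alpha\times M$. Since the fiberwise Anosov flows $g_x^t$ form a continuous family in the $C^1$ topology, structural stability of Anosov flows provides, near any basepoint $x_0\in U_\alpha$, a continuous family of orbit equivalences $\phi_x\colon M_x\to M$ with $\phi_{x_0}=\id$. Packaging these together yields a local topological trivialization $\Phi_\alpha\colon p^{-1}(V_\alpha)\to V_\alpha\times M$, whose fiberwise restrictions are orbit equivalences between $g_x^t$ and a fixed reference Anosov flow $\flot$ on $M$.

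On overlaps $V_\alpha\cap V_\beta$, two such local trivializations differ by a continuous family of self orbit equivalences $\eta_{\alpha\beta,x}\colon M\to M$ of $\flot$. Using the given homotopical trivialization, I would arrange (possibly after modifying each $\Phi_\alpha$ by orbit-preserving reparametrizations along the flow) that fiberwise $\Phi_\alpha|_{M_x}$ is homotopic to $r_x$; the overlap maps $\eta_{\alpha\beta,x}$ are then homotopic to the identity on $M$. Assumption~(b) now does the real work: any self orbit equivalence $h$ of $\flot$ homotopic to the identity must send each closed orbit $\gamma$ of $\flot$ to an orbit $h(\gamma)$ freely homotopic to $\gamma$, and under~(b) the only such orbit is $\gamma$ itself, so $h$ preserves every orbit. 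Consequently each $\eta_{\alpha\beta,x}$ preserves every orbit of $\flot$, and by the isotopy remark following Theorem~\ref{prop_orbit_eq} each is isotopic to the identity through orbit-preserving self orbit equivalences. This isotopy can be used to reparametrize along the flow inside each local trivialization so that the overlap maps become the identity, after which the $\Phi_\alpha$ patch together into a global topological trivialization.

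I expect the main obstacle to lie in the homotopy-matching step: structural stability delivers $\phi_x$ only up to composition with self orbit equivalences of $\flot$, so one must verify that the local trivialization can be chosen fiberwise in the specific homotopy class represented by $r_x$, rather than some other homotopy equivalence $M_x\to M$. Once this matching is in place, assumption~(b) collapses all remaining ambiguity to orbit-preserving, isotopically trivial maps, and the gluing becomes routine. A secondary technical point is to carry out the flow-direction reparametrization continuously in $x$ over each $V_\alpha$ and compatibly across overlaps, which uses the cocycle coming from the $\eta_{\alpha\beta,x}$-isotopies and the paracompactness of $X$ (closed manifold or finite simplicial complex).
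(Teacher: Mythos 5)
Your outline follows essentially the same route as the actual proof of Farrell--Gogolev (summarized in Section~\ref{sec:proof_for_R_covered} above): use structural stability to reduce the structure group to a group $F$ of self orbit equivalences of a reference flow that are homotopic to the identity, use assumption~(b) to force every element of $F$ to preserve each orbit, and then contract the orbit-preserving group to the identity via the canonical retraction of Lemma~4.11 of \cite{FG16}. However, two steps are genuinely incomplete as written. First, the final gluing: ``reparametrize \dots so that the overlap maps become the identity, after which the $\Phi_\alpha$ patch together'' does not work chart by chart, because modifying each local trivialization independently need not respect the cocycle condition on triple overlaps. What actually makes the argument close is that the deformation retraction of the orbit-preserving subgroup onto $\{\id\}$ (sliding along orbits, Lemma~4.11 of \cite{FG16}) is canonical and continuous in the map, so the structure group becomes (weakly) contractible; topological triviality then follows by inductively extending a trivialization over the skeleta of a finite complex, not by making transition maps equal to the identity one overlap at a time. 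Your appeal to paracompactness does not substitute for this obstruction-theoretic step. Second, the statement allows $X$ to be a closed topological manifold, which need not be triangulable, so no chart/skeletal induction applies directly; the missing ingredient is to replace $X$ by a homotopy-equivalent finite simplicial complex (every closed manifold is an ANR~\cite{W77}), pull the bundle and its fiberwise flow back, trivialize there, and pull back again along a homotopy inverse, using that isomorphism of the bundle only depends on the homotopy class of the classifying map.

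Beyond that, the ``homotopy-matching'' issue you flag as the main obstacle is not a side remark: it is exactly where the hypothesis of homotopical triviality is consumed. One must show that the local trivializations furnished by structural stability can be corrected so that all transition maps are self orbit equivalences \emph{homotopic to the identity} (property P2 in the definition of $F$), and this correction must be done without destroying the regularity properties P3--P4 that make the orbit-space arguments and Lemma~4.11 applicable. Leaving this as an unverified arrangement leaves the reduction of the structure group to $F$ --- the backbone of the whole proof --- unproved, so as it stands the proposal is a correct plan in the spirit of \cite{FG16} rather than a complete argument.
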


We will show that, thanks to Theorem~\ref{prop_orbit_eq}, the assumption~\ref{assumption2} turns out not to be needed for Farrell-Gogolev's proof to go through, when the fibers are $3$-dimensional. 

\begin{corollary}[to Theorem~\ref{prop_orbit_eq}]\label{cor_FG}
 Theorem 4.2 of \cite{FG16} holds when the fiber is $3$-dimensional assuming only that the fiberwise Anosov flows are transitive.
\end{corollary}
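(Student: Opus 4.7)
The strategy is to revisit the proof of Theorem 4.2 of \cite{FG16} and show that the hypothesis \ref{assumption2} is invoked there only to ensure that certain self orbit equivalences of the fiberwise flow $g^t$ homotopic to identity preserve every orbit. Theorem~\ref{prop_orbit_eq}, combined with a connectedness argument, will then let us bypass that hypothesis whenever the fiber is $3$-dimensional.

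I would first isolate the relevant step in the Farrell--Gogolev argument. Starting from a homotopy trivialization, their proof produces, over a sufficiently fine open cover of $X$, continuous families of self orbit equivalences of $g^t$ which are homotopic to identity, and then glues these local pieces along overlaps by means of \cite[Lemma 4.11]{FG16}. The assumption \ref{assumption2} enters only at this gluing step: it guarantees that each such self orbit equivalence preserves every orbit, so that Lemma 4.11 yields the needed isotopy to identity through orbit-preserving maps. By parts (1) and (2) of Theorem~\ref{prop_orbit_eq}, if $g^t$ is either non-$\R$-covered, a suspension of an Anosov diffeomorphism, or the geodesic flow of a negatively curved surface, then every self orbit equivalence homotopic to identity already preserves every orbit, and the FG argument goes through unchanged.

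It remains to treat case (3) of Theorem~\ref{prop_orbit_eq}, where $g^t$ is $\R$-covered but neither a suspension nor a geodesic flow. Fix once and for all the distinguished $\eta$ supplied by Theorem~\ref{prop_orbit_eq}(3). On each overlap of the cover one then obtains a continuous family $\{h_x\}$ of self orbit equivalences homotopic to identity, together with integers $k(x)\in\Z$ (respectively $\Z/q\Z$ in subcase (3b)) for which $h_x\circ\eta^{k(x)}$ preserves every orbit. The uniqueness of $k(x)$ from Theorem~\ref{prop_orbit_eq}(3) makes $k(\cdot)$ well-defined, and the quotient of the group of self orbit equivalences homotopic to identity by the subgroup of orbit-preserving ones is discrete (it is generated by the class of $\eta$, which is infinite cyclic or cyclic of order $q$). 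Thus $x\mapsto k(x)$ is locally constant, hence constant on each connected component of $X$, and after absorbing the resulting constant power $\eta^{k_0}$ into one of the two local trivializations one reduces the overlap data to the orbit-preserving setting, where FG's gluing applies verbatim.

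The main obstacle I anticipate is precisely the local constancy of $k(x)$: although Theorem~\ref{prop_orbit_eq}(3) identifies $k(x)$ uniquely at each $x$, showing that $x\mapsto k(x)$ is continuous requires knowing that the set of orbit-preserving self orbit equivalences is open in the space of self orbit equivalences homotopic to identity with respect to the topology in which the family $\{h_x\}$ is continuous. Equivalently, one needs the ``$\eta$-skewing'' detected by Theorem~\ref{prop_orbit_eq}(3) to vary continuously in $x$. This should follow from the uniqueness statement and the fact that $\eta$ arises from a geometric feature of the $\R$-covered foliation, but it is the one point where the topology on the space of self orbit equivalences has to be pinned down with care.
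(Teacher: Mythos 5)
Your reduction to case (3) and your identification of where assumption \ref{assumption2} enters the Farrell--Gogolev argument match the paper, and the decomposition of the relevant self orbit equivalences according to the unique power $k$ of $\eta$ is also the right first move (the paper phrases it as a decomposition $F=\bigcup_k F_k$ of the structure group $F$ of the bundle, with $F_k=F_0\circ\eta^k$, and retracts each $F_k$ to $\eta^k$ via Lemma 4.11 of \cite{FG16}). However, your final step contains a genuine gap. After making $k$ constant on each overlap you claim that one can ``absorb the resulting constant power $\eta^{k_0}$ into one of the two local trivializations'' and thereby return to the orbit-preserving setting. This amounts to asserting that the resulting \v{C}ech $1$-cocycle with values in the discrete group $\F=\langle\eta\rangle$ (isomorphic to $\Z$ or $\Z/q\Z$) is a coboundary. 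Nothing in your argument justifies that: absorbing $\eta^{-k(U,V)}$ into the chart over $V$ changes the data on every other overlap meeting $V$, and the obstruction to doing this consistently is a class in $H^1(X;\F)$, which has no reason to vanish (and the homotopy triviality hypothesis does not kill it, since $\eta$ is itself homotopic to the identity). What you actually obtain at this stage is only a reduction of the structure group to the cyclic group generated by $\eta$, not triviality.

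The paper's proof needs an additional ingredient precisely at this point: the homotopy-implies-isotopy theorem of Gabai--Kazez and Calegari (Theorem~\ref{cor:Calegari}), which says that $\eta$, being homotopic to the identity on a manifold carrying an $\R$-covered foliation, is \emph{isotopic} to the identity. Even then one cannot simply enlarge $\F$ to a connected group, because the isotopy $\{\eta^t\}$ is not a one-parameter subgroup; the paper instead builds the trivialization by hand, triangulating $X$, setting the trivialization equal to suitable powers $\eta^{k_j}$ at vertices, and interpolating over each simplex via $\eta^{t_0k_0+\cdots+t_mk_m}$ in barycentric coordinates, checking compatibility under chart changes; a separate ANR/pull-back argument handles closed manifold bases that admit no triangulation. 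There are also two technical points you do not address: one must verify that $\eta$ itself has the regularity (bi-H\"older, differentiable along orbits) required to lie in the structure group $F$ so that $F_k=F_0\circ\eta^k$, and in the finite order case one must use that $\eta$ can be chosen with $\eta^q=\id$ exactly. Finally, the local constancy of $k$ that worries you is handled in the paper not by a topology on the space of orbit equivalences but by the deformation retraction of $F_0$ (FG's Lemma 4.11) transported to each coset $F_k$, so your anticipated obstacle dissolves, while the real difficulty --- trivializing the residual $\F$-bundle --- is the one your proposal does not resolve.
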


The reason one would want to remove the assumption~\ref{assumption2} of Farrell-Gogolev's result is that it seems to be a quite unnatural condition, as it does not appear to be verified very often. Indeed, in dimension $3$, Barthelm\'e and Fenley \cite{BartFe15} show that lots of (or even ``most'') examples have at least some free homotopy class with \emph{infinitely} many distinct orbits. In fact a (conjecturally) complete list of types of $3$-dimensional Anosov flows that satisfy assumption~\ref{assumption2} is as follows:
\begin{enumerate}
 \item The suspensions of Anosov diffeomorphisms;
 \item The geodesic flow of negatively curved surfaces or orbifolds;
 \item The (generalized) Bonatti-Langevin examples (see \cite{BonattiLangevin,Barbot:generalized_BL}).
\end{enumerate}
Notice that here, as in \cite{FG16} (but contrarily to the convention chosen in \cite{BartFe15}), we consider free homotopy of orbits that preserves the natural orientation given by the flow. Thus, in the case of the geodesic flow for instance, the pairs of periodic orbits representing the same closed geodesic, but travelling in the opposite directions are not freely homotopic to each others.

As for higher dimensions, it seems difficult to conjecture what is the most common behavior, as there are very few (classes of) examples. The two classical examples are suspensions of Anosov diffeomorphisms, and geodesic flows on manifolds of negative curvature. These examples do not admit freely homotopic orbits. But there are also some, quite special, higher dimensional examples constructed by surgery, which have pairs of freely homotopic periodic orbits \cite{BBGR}.

In the proof of Theorem 4.2 of \cite{FG16}, they need the assumption~\ref{assumption2} in only one place: to show that a self orbit equivalence of the fiberwise Anosov has to fix every orbit. Hence, thanks to Theorem \ref{prop_orbit_eq}, if the flow in the fiber is not $\R$-covered or is in cases (\ref{it_suspension_or_geod}) or (\ref{it_notTransversely_orient}), then we can use Farrell--Gogolev's proof without modification to obtain Corollary \ref{cor_FG}. However, if the flow is in case (\ref{it_Rcovered_trans}) of Theorem~\ref{prop_orbit_eq}, then more work is required to deduce the Corollary. We explain the necessary modification of Farrell--Gogolev's proof using the fact that $\eta$ is isotopic to identity. This isotopy result follows from work of Gabai-Kazez and Calegari (see Theorem~\ref{cor:Calegari} in the last section).

\begin{rem}
Because the fiber of the bundle is a $3$-dimensional manifold $M$, the triviality of the bundle actually follows from homotopic triviality without the dynamical assumption, that is, without assuming the existence of fiberwise Anosov flow. Namely, denote by $\text{G}_0(M)$, $\text{Top}_0(M)$ and $\text{Diff}_0(M)$ the spaces of self-homotopy equivalences, homeomorphisms and diffeomorphisms, respectively, which are homotopic to identity (i.e., the connected components of $id_M$). If the base $X$ is a simplicial complex then a homotopically trivial bundle can be topologically (smoothly) trivialized provided that the inclusion $\text{Top}_0(M)\subset\text{G}_0(M)$ ($\text{Diff}_0(M)\subset\text{G}_0(M)$) is a (weak) homotopy equivalence. Indeed, then the construction of such a trivialization can be carried out in a straightforward way by inductively extending the trivializing map over the skeleta of $X$. 

When $M$ is Haken $\text{Top}_0(M)\subset\text{G}_0(M)$ is a homotopy equivalence by work of Hatcher~\cite{H76} and Ivanov~\cite{I76}. 
Further combining with Hatcher's proof of the Smale conjecture $\text{Diff}(\mathbb D^3, \partial)\simeq *$~\cite{H83} one deduces that $\text{Diff}_0(M)\subset\text{G}_0(M)$ is a weak homotopy equivalence from the work of Cerf~\cite{cer68}. When $M$ is hyperbolic, Gabai proved that $\text{Diff}_0(M)\simeq *$~\cite{G01}.\footnote{We would like to thank Tom Farrell for his help with these references.} Hence, when $M$ is Haken or hyperbolic the bundle is, in fact, smoothly trivial due to deep work in $3$-manifold topology without employing the Anosov assumption. 
(The case when $X$ is a closed topological manifold would follow as in the end of the proof of Corollary \ref{cor_FG} in Section \ref{sec:proof_for_R_covered}).

However our proof of topological triviality (using the Anosov assumption) is much simpler, as it only relies on the fact  that  $\text{Diff}_0(M)$ is path-connected (see~Theorem~\ref{cor:Calegari}) and is indifferent to the type of 3-manifold $M$.
\end{rem}

\begin{rem}
The assumption of transitivity in Theorem~\ref{prop_orbit_eq} is unnecessary, but makes the proof easier. Given that this assumption is crucial for Farrell-Gogolev's result, we also include it here for the sake of brevity.
\end{rem}

The authors are very grateful to the anonymous referee for a meticulous reading and many comments which improved our note.

\section{Proof of Theorem \ref{prop_orbit_eq}}

In order to keep this note short, we will refer to Section 2 of \cite{BartFe15} for all the necessary background information on Anosov flows in dimension $3$. Notice that there is one important difference of convention: in~\cite{BartFe15}, free homotopy between orbits refers to a free homotopy between the \emph{non-oriented} curves represented by the orbits, whereas in this present note, we do \emph{not} forget the orientation.

What will be essential for the proof is the notion of \emph{lozenge}, \emph{chain of lozenges} and \emph{string of lozenges}. Call $\orb$ the orbit space of the Anosov flow $\flot$, i.e., $\orb = \wt M / \hflot$. The space $\orb$ with its induced topology is homeomorphic to $\R^2$.
For any point (or set) $x \in M$, we write $\fs(x)$ and $\fu(x)$ for the weak stable and weak unstable leaf through $x$. The notation $\hfs$ and $\hfu$ refers to the lifted foliations.

\begin{definition}
Let $\alpha,\beta$ be two orbits in $\orb$ and let $A \subset \hfs(\alpha)$, $B \subset \hfu(\alpha)$, $C \subset \hfs(\beta)$ and $D \subset \hfu(\beta)$  be four half leaves satisfying:
\begin{itemize}
 \item For any weak stable leaf $\lambda^s$, we have $\lambda^s \cap B \neq \emptyset$ if and only if $\lambda^s \cap D\neq \emptyset$,
 \item For any weak unstable leaf $\lambda^u$, we have $\lambda^u \cap A \neq \emptyset$ if and only if $\lambda^u \cap C \neq \emptyset$,
 \item The half-leaf $A$ does not intersect $D$ and $B$ does not intersect $C$.
\end{itemize}

A \emph{lozenge $L$ with corners $\alpha$ and $\beta$} is the open subset of $\orb$ given by (see Figure \ref{fig:a_lozenge}):
\begin{equation*}
 L := \lbrace p \in \orb \mid \hfs(p) \cap B \neq \emptyset, \; \hfu(p) \cap A \neq \emptyset \rbrace.
\end{equation*}
The half-leaves $A,B,C$ and $D$ are called the \emph{sides}.
\end{definition}

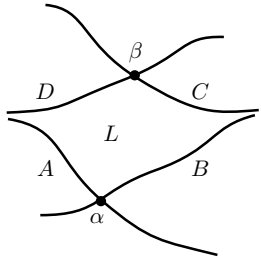
\begin{figure}[h]
\begin{center}
\scalebox{0.85}{
\begin{pspicture}(0,-1.97)(3.92,1.97)
\psbezier[linewidth=0.04](0.02,0.17)(0.88,0.03)(0.74114233,-0.47874346)(1.48,-1.11)(2.2188578,-1.7412565)(2.38,-1.73)(3.26,-1.95)
\psbezier[linewidth=0.04](0.0,0.27)(0.96,0.3105634)(0.75286174,0.37057108)(1.78,0.77)(2.8071382,1.169429)(2.66,1.47)(3.36,1.45)
\psbezier[linewidth=0.04](0.6,1.95)(1.2539726,1.8871263)(1.1265805,1.3646309)(2.0345206,0.7973154)(2.9424605,0.23)(3.2249315,0.2543258)(3.9,0.31)
\psbezier[linewidth=0.04](0.52,-1.33)(1.48,-1.33)(1.3597014,-0.9703507)(2.3,-0.63)(3.2402985,-0.28964934)(3.14,0.05)(3.84,0.23)
\psdots[dotsize=0.16](1.98,0.85)
\psdots[dotsize=0.16](1.46,-1.11)
\usefont{T1}{ptm}{m}{n}
\rput(1.6145313,-0.06){$L$}
\usefont{T1}{ptm}{m}{n}
\rput(1.4,-1.38){$\alpha$}
\rput(2,1.2){$\beta$}

\rput(0.6,-0.6){$A$}
\rput(3,-0.6){$B$}
\rput(0.6,0.6){$D$}
\rput(3,0.6){$C$}
\end{pspicture}
}
\end{center}
\caption{A lozenge with corners $\alpha$, $\beta$ and sides $A,B,C,D$} \label{fig:a_lozenge}
\end{figure}

We say that two lozenges are \emph{adjacent} if they either share a side (and a corner), or they share a corner.

\begin{definition} \label{def:chain_and_strings}
 A \emph{chain of lozenges} $\mathcal{C}$ is a finite or countable collection of lozenges such that, for any two lozenges $L, \bar L\in \mathcal{C}$, there exists a finite sequence of lozenges $L_0, \dots, L_n \in \mathcal{C}$ such that $L=L_0$, $\bar L = L_n$ and for any $i = 0, \dots n-1$, $L_i$ and $L_{i+1}$ are adjacent.

 A \emph{string of lozenges} is a chain of lozenges such that any two adjacent lozenges only share corners.
\end{definition}


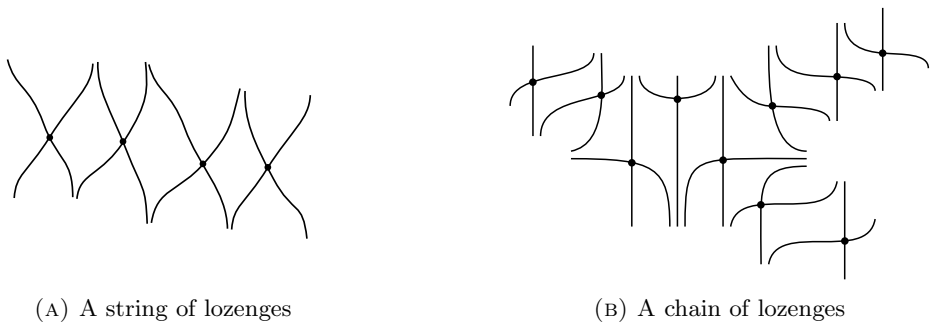
\begin{figure}[h]
\begin{subfigure}[b]{0.45\textwidth}
\centering
  \scalebox{0.55} { 
\begin{pspicture}(-3.8,-1)(4,4.4)
\rput{90}{
\psdots[dotsize=0.16](2.439,2.61)
\psbezier[linewidth=0.04](0.38,0.18)(1.24,0.04)(1.1011424,-0.46874347)(1.84,-1.1)(2.5788577,-1.7312565)(2.74,-1.72)(3.62,-1.94)
\psbezier[linewidth=0.04](0.36,0.28)(1.32,0.32056338)(1.1128618,0.38057107)(2.14,0.78)(3.1671383,1.1794289)(3.56,1.48)(4.26,1.46)
\psbezier[linewidth=0.04](0.96,1.96)(1.6139725,1.8971263)(1.4865805,1.3746309)(2.3945205,0.80731547)(3.3024607,0.24)(3.5849316,0.26432583)(4.26,0.32)
\psbezier[linewidth=0.04](0.24,-1.64)(1.16,-1.62)(1.7197014,-0.9603507)(2.66,-0.62)(3.6002986,-0.27964935)(3.5,0.06)(4.2,0.24)
\psdots[dotsize=0.16](2.34,0.86)
\psdots[dotsize=0.16](1.805,-1.05)
\psbezier[linewidth=0.04](0.98,3.46)(1.5739726,3.4171262)(1.5465806,3.1746309)(2.4545205,2.6073155)(3.3624606,2.04)(3.64,1.6)(4.22,1.58)
\psbezier[linewidth=0.04](1.0,2.06)(1.96,2.06)(1.8397014,2.4196494)(2.78,2.76)(3.7202985,3.1003506)(3.62,3.44)(4.32,3.62)
\psbezier[linewidth=0.04](0.22,-1.74)(0.8139726,-1.7828737)(0.7865805,-2.0253692)(1.6945206,-2.5926845)(2.6024606,-3.16)(2.88,-3.6)(3.46,-3.62)
\psbezier[linewidth=0.04](0.0,-3.54)(0.94,-3.4394367)(0.54,-3.22)(1.44,-2.74)(2.34,-2.26)(2.86,-2.04)(3.56,-2.06)
\psdots[dotsize=0.16](1.72,-2.605)
}
\end{pspicture}
}
  \caption{A string of lozenges}
\label{fig:string_lozenges} 
\end{subfigure}
\begin{subfigure}[b]{0.45\textwidth}
\centering
\scalebox{.5} 
{
\begin{pspicture}(0,-3.62)(11.02,3.62)
\psbezier[linewidth=0.04](6.8,2.6)(6.8,1.6)(6.8,-0.2)(7.8,-0.2)
\psline[linewidth=0.04cm](5.6,1.8)(5.6,-2.2)
\psbezier[linewidth=0.04](5.8,1.8)(6.0,1.4)(6.301685,1.0393515)(6.8,1.0)(7.298315,0.96064854)(8.4,1.0)(8.4,0.6)
\psbezier[linewidth=0.04](7.0,2.6)(7.0,2.0)(7.5016847,1.8393514)(8.0,1.8)(8.498315,1.7606486)(9.6,1.8)(9.6,1.4)
\psline[linewidth=0.04cm](8.6,0.6)(8.6,3.2)
\psbezier[linewidth=0.04](8.8,3.2)(8.8,2.6)(9.301684,2.4393516)(9.8,2.4)(10.298315,2.3606486)(11.0,2.4)(11.0,2.0)
\psline[linewidth=0.04cm](9.8,1.4)(9.8,3.6)
\psbezier[linewidth=0.04](7.8,-0.4)(4.8,-0.4)(4.6,-0.2)(4.6,-2.2)
\psline[linewidth=0.04cm](4.4,1.8)(4.4,-2.2)
\psbezier[linewidth=0.04](5.4,1.8)(5.4,1.0)(3.4,1.0)(3.4,1.8)
\psline[linewidth=0.04cm](3.2,1.8)(3.2,-2.2)
\psbezier[linewidth=0.04](4.2,-2.2)(4.2,-0.6)(3.8,-0.4)(1.6,-0.4)
\psbezier[linewidth=0.04](5.8,-2.2)(6.0,-1.0)(8.4,-2.2)(8.6,-1.0)
\psbezier[linewidth=0.04](7.8,-0.6)(6.4,-0.6)(6.6,-1.1777778)(6.6,-3.2)
\psbezier[linewidth=0.04](6.8,-3.2)(7.0,-2.0)(9.4,-3.2)(9.6,-2.0)
\psline[linewidth=0.04cm](8.8,-3.6)(8.8,-1.0)
\psbezier[linewidth=0.04](3.0,1.8)(3.0,1.0)(0.8,1.4)(0.8,0.2)
\psbezier[linewidth=0.04](1.6,-0.2)(2.6,0.0)(2.4,1.6)(2.4,2.4)
\psbezier[linewidth=0.04](2.2,2.4)(2.2,1.6)(0.0,2.0)(0.0,1.0)
\psline[linewidth=0.04cm](0.6,0.2)(0.6,2.6)
\psdots[dotsize=0.2](9.8,2.4)
\psdots[dotsize=0.2](8.6,1.78)
\psdots[dotsize=0.2](6.9,1)
\psdots[dotsize=0.2](5.6,-0.44)
\psdots[dotsize=0.2](6.595,-1.62)
\psdots[dotsize=0.2](8.8,-2.58)
\psdots[dotsize=0.2](4.4,1.18)
\psdots[dotsize=0.2](3.2,-0.51)
\psdots[dotsize=0.2](2.4,1.28)
\psdots[dotsize=0.2](0.6,1.63)
\end{pspicture} 
}
\caption{A chain of lozenges} 
\label{fig:chain_of_lozenges}
\end{subfigure}
\caption{Chain and string of lozenges} \label{fig:chain_and_string_lozenges}
\end{figure}


Lozenges are essential to the study of freely homotopic orbits because Fenley \cite{Fen:SBAF} proved that if a periodic orbit $\alpha$ is freely homotopic to another periodic orbit $\beta$ or its inverse then there exists two coherent lifts $\wt \alpha$ and $\wt \beta$ in $\wt M$ that are two corners of a chain of lozenges (see \cite[Section 2]{BartFe15}). Conversely, if there exists two lifts $\wt \alpha$ and $\wt \beta$ in $\wt M$ of periodic orbits that are two corners of a chain of lozenges, then, up to switching $\alpha$ and $\beta$, the orbit $\alpha$ is freely homotopic to $\beta^{\pm 1}$ or $\beta^{\pm 2}$.

Let $\wt \alpha$ be the lift of a periodic orbit, then one can show that there exists a uniquely defined maximal chain of lozenges which contains $\wt \alpha$ as a corner. We call this maximal chain $\FH(\wt \alpha)$ (see \cite[Section 2]{BartFe15}). By Fenley's result, the union of the corners in $\FH(\wt \alpha)$ contains (a coherent lift of) the complete free homotopy class of $\alpha$, as well as all the orbits freely homotopic to the inverse of $\alpha$.

We mention two results about lozenges that we will use:
\begin{lemma} \label{lem_no_corner_in_side_lozenges}
 Let $L_1$ and $L_2$ be two lozenges sharing a side. Then an orbit $\alpha$ in $L_1$ or $L_2$ is the corner of at most one lozenge.
\end{lemma}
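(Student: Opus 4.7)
The plan is to argue by contradiction: suppose $\alpha$ lies in the interior of $L_1$ (the case $\alpha\in L_2$ being symmetric) and is a corner of two distinct lozenges $M_1$ and $M_2$, with partner corners $\gamma_1,\gamma_2$.

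First I would set up the branching picture at the shared side. Let $p$ be the common corner of $L_1$ and $L_2$ at the base of the shared side $s$, which without loss of generality lies in $\hfs(p)$; let $q$ and $r$ be the other corners of $L_1$ and $L_2$ respectively. The perfect-fit conditions in the definition of a lozenge, combined with the hypothesis that $L_1$ and $L_2$ share $s$, imply that $\hfu(q)$ and $\hfu(r)$ are non-separated unstable leaves on the side facing $s$, and that this branching of $\hfs(p)$ at the free end of $s$ is entirely accounted for by the presence of the two lozenges. Because $\alpha$ is in the interior of $L_1$, the definition of a lozenge together with the equivalence $\lambda^s\cap B\neq\emptyset \Leftrightarrow \lambda^s\cap D\neq\emptyset$ imply that $\hfs(\alpha)$ transversally crosses both unstable sides of $L_1$, while $\hfu(\alpha)$ crosses the shared side $s$ and therefore enters the interior of $L_2$.

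Next I would use the standard observation that a given orbit can be a corner of at most one lozenge per open quadrant of $\orb$ cut out by its stable and unstable leaves, since the perfect-fit conditions at the corner uniquely determine the partner. Thus $M_1$ and $M_2$ sit in two distinct quadrants of $\alpha$, producing perfect fits involving at least two half-leaves of $\hfs(\alpha)\cup\hfu(\alpha)$. The crucial step is then to trace each of these new perfect fits back along the foliation: a perfect fit of a half-leaf of $\hfs(\alpha)$ with some $\hfu(\gamma_i)$ should force $\hfu(\gamma_i)$ to be non-separated, on the appropriate side, from one of the unstable leaves $\hfu(p)$, $\hfu(q)$, $\hfu(r)$, and symmetrically a perfect fit of a half-leaf of $\hfu(\alpha)$ should force non-separation with respect to one of $\hfs(p)$, $\hfs(q)$, $\hfs(r)$. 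Since these non-separations are already exhausted by the lozenges $L_1$ and $L_2$ themselves, a quadrant-by-quadrant enumeration shows that at most one quadrant of $\alpha$ can support a lozenge, contradicting $M_1\neq M_2$.

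The main obstacle will be making the ``tracing back'' step precise. It rests on the fact, implicit in Fenley's work and in \cite[Section~2]{BartFe15}, that the interior of the region $L_1\cup s\cup L_2$ is Hausdorff for both the stable and the unstable foliations, so that any branching witnessed along a half-leaf of $\hfs(\alpha)$ or $\hfu(\alpha)$ must ultimately match a branching event at one of the corner leaves of $L_1$ or $L_2$. Together with the observation that the branching of $\hfs(p)$ at the free end of $s$ is fully saturated by $\hfu(q)$ and $\hfu(r)$, this reduces the problem to a finite bookkeeping check over the four quadrants of $\alpha$.
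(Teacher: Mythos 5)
There is a genuine gap, and it sits exactly where you flag the ``main obstacle'': the tracing-back step. The principle you rely on --- that a perfect fit between a half-leaf of $\hfs(\alpha)$ and $\hfu(\gamma_i)$ forces $\hfu(\gamma_i)$ to be non-separated from one of the unstable leaves through the corners of $L_1$, $L_2$ --- is not a valid general implication. Perfect fits do not come from branching: in a skewed $\R$-covered Anosov flow every lifted orbit is a corner of lozenges, hence of perfect fits, while both leaf spaces are Hausdorff and there are no non-separated leaves at all. So any argument deducing non-separation from the mere existence of the lozenges $M_1,M_2$ must use the specific geometry of the configuration, and the justification you offer does not do that: Hausdorffness of the foliations restricted to the region $L_1\cup s\cup L_2$ says nothing about the lozenges $M_i$, whose interiors and partner corners $\gamma_i$ need not meet that region at all (a lozenge at $\alpha$ in the quadrant pointing away from the shared side leaves $L_1\cup L_2$). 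In addition, the assertion that the branching at the free end of $s$ is ``entirely accounted for'' by $L_1$ and $L_2$ is unjustified: there can be more than two pairwise non-separated leaves, i.e.\ further lozenges adjacent to $L_1$ or $L_2$ along sides, and your ``exhaustion'' and final bookkeeping step would have to handle those as well. The correct and uncontroversial parts of your plan are the at-most-one-lozenge-per-quadrant observation and the remark that $\hfu(\alpha)$ crosses the shared side into $L_2$; but the heart of the proof is missing.

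For comparison, the paper does not attempt a from-scratch quadrant analysis: it quotes the result that an orbit in the interior of a lozenge is a corner of at most two lozenges (Lemma 2.18 of \cite{BartFe15}) and then uses the shared side to exclude one of the two remaining possibilities. If you want to keep your route, the honest way to close the gap is to prove (or cite) precisely that input --- i.e.\ to determine which quadrants of an orbit interior to a lozenge can carry a lozenge, using the perfect-fit structure of $L_1$ itself rather than an appeal to non-separated leaves --- and only then bring in the shared side.
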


\begin{proof}
Any orbit inside (the interior of) a lozenge is the corner of at most two lozenges (see \cite[Lemma 2.18]{BartFe15}). Now using the fact that $L_1$ and $L_2$ share a side one can see that one of the possible lozenges cannot exist.
\end{proof}

\begin{lemma}[\cite{BartFe15}] \label{lem_finitely_many_not_string}
 There are only finitely many periodic orbits $\alpha$ in $M$ such that $\FH(\wt \alpha)$ is \emph{not} a string of lozenges
\end{lemma}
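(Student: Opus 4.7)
The plan is to translate the statement ``$\FH(\wt\alpha)$ is not a string'' into a purely group-theoretic statement about $\pi_1(M)$, and then invoke 3-manifold topology to conclude finiteness. By the definition of a string of lozenges, if $\FH(\wt\alpha)$ fails to be a string, then it contains two adjacent lozenges $L_1, L_2$ which share an entire side. Let $\wt\beta$ denote their common corner on that shared side. The first step is to observe that $\wt\beta$ then serves as a corner of (at least) two lozenges which are not related by a deck translation fixing $\wt\beta$, so the stabilizer $\mathrm{Stab}_{\pi_1(M)}(\wt\beta)$ properly contains the cyclic subgroup generated by the periodic deck translation along $\wt\beta$. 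Combined with the fact that stabilizers of orbits in $\orb$ are free abelian (since they act by commuting translations on the flow line), this would produce a $\Z^2$ subgroup of $\pi_1(M)$.

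Next I would use standard 3-manifold topology to bound the number of such orbits. Since $M$ supports an Anosov flow it is irreducible and aspherical, so every $\Z^2$ subgroup of $\pi_1(M)$ is carried by an embedded incompressible torus; by JSJ decomposition there are only finitely many conjugacy classes of such $\Z^2$ subgroups. Hence the set of periodic orbits $\beta$ in $M$ whose lift has non-cyclic stabilizer is finite. Call this finite set $\mathcal{T}$.

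Now I would argue that every $\alpha$ with $\FH(\wt\alpha)$ not a string has some $\beta \in \mathcal{T}$ whose lift appears as a corner of $\FH(\wt\alpha)$. Since $\FH(\wt\beta) = \FH(\wt\alpha)$ whenever $\wt\beta$ is a corner of $\FH(\wt\alpha)$ (by maximality of the chain), this already shows that only finitely many \emph{free homotopy classes} have non-string chains. To promote this to finiteness of orbits, I would appeal to the additional fact, implicit in the structure theory of chains around a $\Z^2$-orbit, that a chain containing such a $\beta$ consists of only finitely many lozenges modulo the $\Z^2$ action, and the $\Z^2$ action on the corners projects to finitely many orbits in $M$. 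Thus each of the finitely many ``bad'' classes contains only finitely many orbits of $\flot$.

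The main obstacle I anticipate is the last step: controlling the number of orbits in $M$ arising from corners of a non-string chain. A priori, a chain may have infinitely many corners, and one must use the rigidity imposed by the $\Z^2$ action (which permutes lozenges transitively on the pieces meeting $\wt\beta$) together with the fact that corners far from $\wt\beta$ sit inside strings, to conclude that only finitely many deck-translation orbits of corners exist. The rest of the proof is essentially bookkeeping using Fenley's structural results on chains, which is why the lemma can reasonably be attributed to \cite{BartFe15}.
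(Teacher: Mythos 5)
There is a genuine gap, and it sits at the very first step of your reduction. You claim that if $\wt\beta$ is a corner of two lozenges not related by a deck translation fixing $\wt\beta$, then the stabilizer of $\wt\beta$ in $\pi_1(M)$ properly contains the cyclic group of the periodic deck translation, yielding a $\Z^2$ subgroup. No argument is given for this, and it is false: the stabilizer of a lifted orbit is the set of deck transformations preserving that flow line, and since the deck group acts freely and properly discontinuously, this stabilizer acts freely and properly discontinuously on a line, hence is always infinite cyclic -- being a corner of several lozenges does not add elements to it. Worse, the conclusion your strategy would force (non-string chains only on manifolds whose fundamental group contains $\Z^2$, i.e.\ toroidal manifolds) is false: non-$\R$-covered Anosov flows exist on hyperbolic (atoroidal) $3$-manifolds, and for such flows there are non-separated stable leaves, hence pairs of lozenges sharing a side, hence orbits $\alpha$ with $\FH(\wt\alpha)$ not a string, even though $\pi_1(M)$ has no $\Z^2$. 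So no route through essential tori or the JSJ decomposition can prove this lemma. (As a secondary point, the assertion that JSJ gives only finitely many conjugacy classes of $\Z^2$ subgroups is also incorrect, e.g.\ in Seifert-fibered or graph manifolds; and your final ``bookkeeping'' step relies on the same nonexistent $\Z^2$ action.)

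For comparison, the paper does not reprove the statement at all: it cites Proposition 2.25 of \cite{BartFe15} and its proof. The mechanism there is the one your sketch is missing: if $\FH(\wt\alpha)$ is not a string, it contains two lozenges sharing a side, and the corners of such lozenges lie on non-separated (branching) leaves of the lifted weak foliations; Fenley's structure-of-branching results say that non-separated leaves are periodic and, up to deck transformations, finite in number, and the structure of the associated chains then bounds the periodic orbits involved. In short, the finiteness comes from finiteness of branching in the leaf space, not from toroidality of $M$, and your group-theoretic translation does not capture the phenomenon.
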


\begin{proof}
 See Proposition 2.25 of \cite{BartFe15} and its proof.
\end{proof}

We finally recall a result, due to Barbot and Fenley, about $\R$-covered Anosov flows, which will give the map $\eta$ from case (\ref{it_Rcovered_trans}) of Theorem \ref{prop_orbit_eq}.
\begin{theorem}[Barbot \cite{Bar:CFA,Bar:PAG}, Fenley \cite{Fen:AFM}] \label{thm_Rcovered}
 Let $\flot$ be an $\R$-covered Anosov flow on a $3$-manifold $M$. Then
 \begin{itemize}
  \item Either $\flot$ is orbit equivalent to a suspension of an Anosov diffeomorphism;
  \item Or the weak stable (and the weak unstable) foliations of $\flot$ are \emph{not} transversely oriented;
  \item Or there exists a H\"older-continuous homeomorphism $\eta \colon M \rightarrow M$, homotopic to identity, such that $\eta^2$ is a self orbit equivalence of $\flot$. 
 
 Moreover, for any periodic orbit $\alpha$, its complete free homotopy class is given by $\{ \eta^{2i}(\alpha), i\in \Z \}$, and, for any lift $\wt \alpha$, we have
 \[
  \FH(\wt \alpha) = \{ \wt \eta^{i}(\alpha), i\in \Z \}.
 \]
 \end{itemize}
\end{theorem}

Since we will be using that fact later on, notice that, in the setting of Theorem \ref{thm_Rcovered}, all the orbits freely homotopic to a periodic orbit $\alpha$ are given by $\{ \eta^{2i}(\alpha), i\in \Z \}$, i.e., the orbit of $\alpha$ under \emph{even} powers of $\eta$, and all the orbits freely homotopic to \emph{the inverse} of $\alpha$ are given by the orbit of $\alpha$ under the \emph{odd} powers of $\eta$, i.e., $\{ \eta^{2i+1}(\alpha), i\in \Z \}$.

When an Anosov flow is $\R$-covered and not orbit equivalent to a suspension, Barbot \cite[Th\'eor\`eme C]{Bar:CFA} showed that the manifold supporting it is automatically orientable. However, the weak stable or weak unstable foliations might not be transversely orientable (and if one of them is not, since the manifold is orientable, the other is not either). If we fix an orientation of the, say, stable leaf space (which is orientable since it is homeomorphic to $\R$), for the stable foliation to be not transversely orientable means that there exists an element $\gamma \in \pi_1(M)$ that reverses the orientation (and hence must fix a stable leaf).
In this case, there still exists a homeomorphism $\wt\eta\colon \wt M \to \wt M$ in the universal cover that sends orbits to orbits of the lifted flow. However this $\wt\eta$ is now a \emph{twisted} $\pi_1(M)$-equivariant homeomorphism, i.e., for an orientation reversing $\gamma \in \pi_1(M)$, $\wt\eta \circ \gamma = \gamma \circ \wt\eta^{-1}$. So $\wt \eta$ does not descend to a homeomorphism of $M$.

To take care of the case where $\flot$ is $\R$-covered but not transversely orientable, we will use the following lemma.
\begin{lemma}\label{lem_not_transversely}
 If $\flot$ is $\R$-covered, not orbit equivalent to a suspension, and such that its weak stable or unstable foliations are not transversely orientable, then there exists a periodic orbit $\alpha$ that is freely homotopic to no other orbit.
\end{lemma}

\begin{proof}
 Consider $\gamma\in \pi_1(M)$ that reverses the orientation of the stable leaf space. The $\gamma$ fixes a stable leaf $L^s$, and thus represents a periodic orbit $\alpha$ of $\flot$.
 Let $\wt\alpha$ be the lift of $\alpha$ that is fixed by $\gamma$. Then, since $\gamma$ reverses the orientation of the stable leaf space, $\wt\alpha$ is the only corner fixed by $\gamma$ on the string of lozenge determined by $\wt\alpha$. So, in particular, no other orbit can be freely homotopic to $\alpha$ (but $\alpha^2$ is freely homotopic to other orbits or itself).
\end{proof}

\begin{proof}[Proof of Theorem \ref{prop_orbit_eq}]

We will start the proof by making several general remarks about the possible action of the lift of $h$ on the lifts of periodic orbits.

First, since $h$ is homotopic to identity, for any periodic orbit $\alpha$ of $\flot$, $h(\alpha)$ is freely homotopic to $\alpha$ (with the orientation preserved). We fix one homotopy of $h$ to identity and consider  the lift $\wt h$ of $h$ to $\wt M$ obtained via lifting that homotopy. Then the orbits $\wt\alpha$ and $\wt h(\wt\alpha)$ will either be equal, or be two corners on a chain of lozenges (by the above discussion, \cite{Fen:SBAF}).

Notice also that, as we constantly use it, that since $h$ is a self-orbit equivalence, it globally preserves the weak stable and weak unstable foliations.

Now suppose that $\alpha$ is a periodic orbit and $\wt \alpha$ is a lift. Let $\FH(\wt\alpha)$ be the maximal chain of lozenges containing $\wt \alpha$. Then $\wt h$ must preserve $\FH(\wt\alpha)$ globally. 

Suppose for a moment that $\wt h$ preserves one lozenge $L$ in $\FH(\wt\alpha)$, then we claim that $\wt h$ must act as the identity on the whole chain of lozenges $\FH(\wt\alpha)$.
Indeed, let $\beta_1$ and $\beta_2$ be the two corners of $L$. Since $L$ is preserved by $\wt h$, the corners are either fixed or switched. Let $g$ be the unique non-trivial element of $\pi_1(M)$ which fixes each corner of $\FH(\wt\alpha)$ and represents $\alpha$ (or $\alpha^2$ if the element representing $\alpha$ does not fix all the corners).

Now consider the action of $g$ on the weak stable and weak unstable leaves through $\beta_1$ and $\beta_2$ on the orbit space $\orb$. Since $\beta_1$ and $\beta_2$ are the two corner of a lozenge, one of two things can happen: Either $\beta_1$ is an attracting fixed point for the action of $g$ on (the projection to $\orb$ of) $\hfs(\beta_1)$, in which case $\beta_2$ must be a \emph{repelling} fixed point for the action of $g$ on (the projection to $\orb$ of) $\hfs(\beta_2)$. Or $\beta_1$ is a repelling fixed point for the action of $g$ on $\hfs(\beta_1)$ and $\beta_2$ must be an attracting fixed point for the action of $g$ on $\hfs(\beta_2)$. In either case the action of $g$ through the leaves of $\beta_1$ and $\beta_2$ in the orbit space $\orb$ are \emph{opposite}. Since $\wt h$ is a lift of $h$ obtained via an homotopy to identity, it commutes with all deck transformations, so, in particular, it commutes with $g$. Thus we conclude that $\wt h$ cannot switch $\beta_1$ and $\beta_2$. 
Therefore, $\wt h$ has to fix $\beta_1$ and $\beta_2$ as well as preserve each side of the stable and unstable leaves through $\beta_1$ and $\beta_2$. Therefore, $\wt h$ also has to preserve every other lozenge admitting $\beta_1$ or $\beta_2$ as a corner. Applying that argument inductively shows that $\wt h$ must then act as the identity on the whole chain of lozenges $\FH(\wt\alpha)$.

Hence we showed that, for any periodic orbit $\alpha$ and the corresponding chain of lozenges $\FH(\wt\alpha)$, either the map $\wt h$ does not preserve any lozenge in the chain, or, if it does preserve a lozenge, $\wt h$ must act as the identity on $\FH(\wt\alpha)$.

We can now head on to the proof.\\

\emph{Case 1:}
We start by assuming that $\flot$ is not $\R$-covered. Then there exists at least two periodic orbits $\alpha_1$ and $\alpha_2$, which lift to $\wt \alpha_1$ and $\wt \alpha_2$ in the universal cover $\wt M$ such that $\hfs(\wt\alpha_1)$ and $\hfs(\wt\alpha_2)$ are not separated (see \cite{Fen:SBAF}). Moreover, up to changing $\alpha_2$, we can assume that $\wt \alpha_1$ and $\wt \alpha_2$ are the opposite corners of two lozenges $L_1$ and $L_2$ that share a side (see Figure \ref{fig_for_proof}). Assume that $\wt \alpha_1$ is a corner of $L_1$, $\wt \alpha_2 $ a corner of $L_2$, and denote by $\wt \alpha_3$ the last corner. We want to show that $\wt h$ fixes $\wt \alpha_1$ and $\wt \alpha_2$, and hence the whole chain of lozenges containing them, i.e., we want to show that $\wt h$ acts as the identity on $\FH(\wt\alpha_1)$.

Let $B_1$ denote the quadrant determined by the stable and unstable leaves of $\wt \alpha_1$ opposite to $L_1$ (see Figure \ref{fig_for_proof}) and let $\hat B_1\supset B_1$ denote the union of $B_1$ and two quadrants adjacent to $B_1$, that is, $\hat B_1$ is the union of all quadrants at $\wt\alpha_1$, but the one containing $L_1$. Similarly, denote by $B_2$ the quadrant determined by the stable and unstable leaves of $\wt\alpha_2$ opposite to $L_2$ and by $\hat B_2$ the union of $B_2$ and two adjacent quadrants. Notice in particular that $\hat B_1 \cap \hat B_2  = \emptyset$.


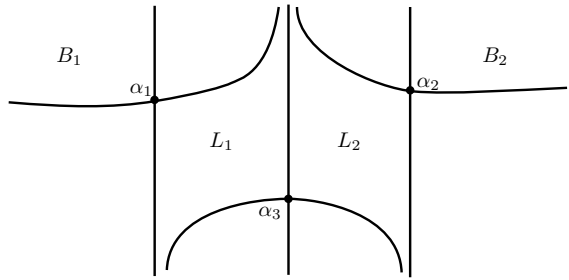
\begin{figure}[h]
\begin{center}
 \scalebox{0.8} 
{
\begin{pspicture}(0,-2.28)(9.2,2.28)
\psline[linewidth=0.04cm](2.4,2.24)(2.4,-2.24)
\psline[linewidth=0.04cm](6.6,2.22)(6.6,-2.26)
\psbezier[linewidth=0.04](2.6,-2.14)(2.64,-1.14)(4.040717,-0.9621294)(4.58,-0.96)(5.1192827,-0.9578706)(6.42,-1.2)(6.46,-2.18)
\psline[linewidth=0.04cm](4.6,2.26)(4.6,-2.22)
\psbezier[linewidth=0.04](4.44,2.22)(4.28,1.1)(3.78,1.02)(3.66,0.96)(3.54,0.9)(3.1255963,0.78)(2.408899,0.66)(1.6922019,0.54)(0.9555963,0.56)(0.0,0.64)
\psbezier[linewidth=0.04](4.74,2.22)(4.74,1.42)(6.069497,0.87071514)(6.66,0.82)(7.250503,0.7692849)(8.3,0.84)(9.18,0.88)
\psdots[dotsize=0.14](2.4,0.68)
\rput(2.18,0.88){$\wt\alpha_1$}
\psdots[dotsize=0.14](6.6,0.84)
\rput(6.9,0.99){$\wt\alpha_2$}
\psdots[dotsize=0.14](4.6,-0.96)
\rput(4.3,-1.24){$\wt\alpha_3$}
\rput(3.5,0){$L_1$}
\rput(5.6,0){$L_2$}
\rput(1,1.4){$B_1$}
\rput(8,1.4){$B_2$}
\end{pspicture} 
}
\caption{The lozenges $L_1$ and $L_2$} 
\label{fig_for_proof}
\end{center}
\end{figure}

Since periodic orbits of $g^t$ are dense and thanks to Lemma~\ref{lem_finitely_many_not_string} we can pick $\wt\beta_1\in L_1$ such that $\FH(\wt\beta_1)$ is a string of lozenges. Further, by Lemma~\ref{lem_no_corner_in_side_lozenges}, $\wt\beta_1$ must be an end-point of this string. Also note that all corners of 
$\FH(\wt\beta_1)$ except $\wt\beta_1$ must lie in $B_1$. If $\wt h(\wt\beta_1)=\wt\beta_1$ then $L_1$ is fixed. Hence, by the discussion preceding the start of \emph{Case 1}, $\wt h$ must act as the identity on the corners of the whole chain of lozenges $\FH(\alpha_1)$. This is what we wanted to show.
We thus assume for a contradiction that $\wt h(\wt\beta_1)\neq\wt\beta_1$ and, hence, $\wt h(\wt\beta_1)\in B_1$.
Analogously, either $\wt h$ acts as the identity on $\FH(\alpha_1)$ or we can find a periodic orbit $\wt\beta_2\in L_2$ such that $\wt h(\wt\beta_2)\in B_2$.

Because $L_1$ and $L_2$ share a side, by lozenges property, the leaf $\hfs(\wt \beta_1)$ intersects the leaf $\hfu(\wt \beta_2)$. On the other hand we have $\wt h(\hfs(\wt \beta_1))=\hfs(\wt h(\wt \beta_1))\subset \hat B_1$ because $\wt h(\wt \beta_1)\in B_1$. And similarly, $\wt h(\hfu(\wt \beta_2))=\hfu(\wt h(\wt \beta_2))\subset \hat B_2$. Because $\hat B_1$ and $\hat B_2$ are disjoint we obtain that $\wt h(\hfs(\wt \beta_1))$ and $\wt h(\hfu(\wt \beta_2))$ are disjoint which yields a contradiction.
Therefore $\wt h$ must fix $\wt \beta_1$, thus $L_1$, thus, as explained above, it acts as the identity on the maximal chain of lozenges containing $\wt \alpha_1$.

So far, we showed that any chain of lozenges such that some of its corners are on non-separated leaves must be fixed by $\wt h$. Now, take any periodic orbit $\alpha$ in $M$ and a lift $\wt \alpha$. We want to show that $\wt h(\wt \alpha) = \wt \alpha$. If $\FH(\wt \alpha)$ is not a string of lozenges, i.e., there exists two corners on non-separated leaves (see \cite[Section 2]{BartFe15}), then we already showed that $\wt \alpha$ is fixed. So we may assume that $\FH(\wt \alpha)$ is a string of lozenges.
Recall that every weak stable and weak unstable leaf is dense in $M$. So, there exists an element $\gamma\in \pi_1(M)$ such that the stable leaf of $\wt \alpha$ intersect the lozenge $\gamma\cdot L_1$.

Since $\wt h$ fixes $L_1$ (and thus $\gamma\cdot L_1$ as $\wt h$ commutes with the deck transformations), it must send $\wt \alpha$ to another corner of $\FH(\wt \alpha)$ such that its weak stable leaf also intersects $\gamma\cdot L_1$.

If there are no such other corners in $\FH(\wt \alpha)$, then we obtained $\wt h(\wt \alpha) = \wt \alpha$, as we desired. So suppose there exists $\wt \beta$ a corner of $\FH(\wt \alpha)$ such that $\hfs(\wt \beta)$ intersects $\gamma\cdot L_1$. Since $\FH(\wt \alpha)$ is a string of lozenges, such a corner $\wt \beta$ is necessarily unique and must be one of the opposite corners of $\wt \alpha$. Let $L$ be the lozenge with corner $\wt\alpha$ and $\wt\beta$. Then $L$ intersects $\gamma\cdot L_1$, which is fixed by $\wt h$, thus $L$ is fixed by $\wt h$, and this once more implies that $\wt h(\wt \alpha) = \wt \alpha$.

So every periodic orbit in $\orb$ has to be fixed by $\tilde h$, and by density of periodic orbits (since $\flot$ is transitive), every orbit is fixed. This concludes the proof of item 1 of the Theorem.\\

\emph{Case 2:}
Now, suppose that $\flot$ is $\R$-covered. Then, according to Theorem \ref{thm_Rcovered}, there are three possible cases. If $\flot$ is orbit equivalent to a suspension of an Anosov diffeomorphism, then there does not exist any distinct pair of periodic orbits that are freely homotopic and hence $h$ fixes every orbit (see, for instance, \cite{BartFe15} or \cite{FG16}). So we are in case (\ref{it_suspension_or_geod}), and we are done.

So let us assume that $\flot$ is $\R$-covered but not orbit equivalent to a suspension. Then $M$ is oriented (and transitivity is automatic) \cite{Bar:CFA}.
Notice that, given our assumption, every chain of lozenges is a string of lozenges.

Suppose first that there exists a periodic orbit $\alpha$ such that one of its lifts (and hence all) is fixed by $\wt h$ and a periodic orbit $\beta$ such that $h(\beta) \neq \beta$. Then, using density of weak stable and unstable leaves once again, one can choose lifts $\wt \alpha$ and $\wt \beta$ such that $\hfu(\wt \alpha)$ intersect $\hfs(\wt \beta)$. So $\hfs(\wt h(\wt \beta))$ must also intersects $\hfu(\wt \alpha)$, and, since $\flot$ is $\R$-covered, this implies that $\wt h(\wt \beta)$ must be one of the corners opposite to $\wt \beta$ in the string of lozenges $\FH(\wt\beta)$. Then, as before, since the action (in the orbit space) of the deck transformations that fixes all the corners of $\FH(\wt\beta)$ must be opposite on opposite corners of any lozenge and $\wt h$ commutes with deck transformations, we obtain a contradiction.

So either every lift of periodic orbits is fixed, or they are all moved by $\wt h$.

According to Lemma \ref{lem_not_transversely}, if one of the foliations of $\flot$ is not transversely orientable, then there exists at least one orbit fixed by $\wt h$, thus all orbits are fixed and we are in case (\ref{it_notTransversely_orient}) of Theorem \ref{prop_orbit_eq}.

Otherwise, Theorem \ref{thm_Rcovered} gives us a homeomorphism homotopic to identity $\eta$. Let $\wt \eta$ be the lift to the universal cover given by the homotopy to $\id$.

We suppose first that $\wt h$ moves all lifts of periodic orbits. Then, for every periodic orbit $\alpha$, since $\FH(\alpha) = \{ \eta^{2i}(\alpha), i\in \Z \}$ (by Theorem \ref{thm_Rcovered}), there exists $k_{\alpha}$ such that $\wt h(\wt\alpha) = \wt\eta^{ 2k_{\alpha}}(\wt\alpha)$. Our goal is to show that $k_{\alpha}$ is in fact independent of the orbit $\alpha$.

Let $\alpha$ be a fixed periodic orbit, and $\wt\alpha$ a lift. Let $\wt \beta$ be a lift of another periodic orbit. Up to acting by the fundamental group, we can assume that $\hfs(\wt\beta) \cap \hfu(\wt\alpha) \neq \emptyset$. Furthermore, still without loss of generality, we assume that $\hfs(\wt\beta)$ is above $\hfs(\wt\alpha)$ for the orientation that makes the projection of $\eta$ to the stable leaf space $\leafs$ an increasing function.

Hence, $\hfs(\wt h(\wt\beta))$ has to intersect $\hfu(\wt h(\wt\alpha)) = \hfu(\wt\eta^{2k_{\alpha}}(\wt\alpha))$, and has to be above $\hfs(\wt\eta^{2k_{\alpha}}(\wt\alpha))$, because $h$ preserves the orientation. So, in the leaf space $\leafs \simeq \R$, the stable leaves $\hfs(\wt h(\wt\beta))$ and $\hfs \left(\wt\eta^{2k_{\alpha}}(\wt\beta) \right)$ both have to be in between $\hfs(\wt\eta^{2k_{\alpha}}(\wt\alpha))$ and $\hfs(\wt\eta^{2k_{\alpha} + 1}(\wt\alpha))$. Adding the fact that $\wt h(\wt\beta)$ and $\wt\eta^{2k_{\alpha}}(\wt\beta)$ are both corners of the same string of lozenges, we conclude that $\wt h(\wt\beta) = \wt\eta^{2k_{\alpha}}(\wt\beta)$.

So $k_{\alpha} = k_{\beta}$ and since this is true for any $\beta$, we proved that there exists $k \neq 0$ depending only on $h$ and the flow, such that the action of $\wt h$ on the orbit space is the same as the action of $\wt \eta^{2k}$, i.e., $\wt h \circ \wt \eta^{-2k}$ preserves every orbit of the flow.

We obtained that $\wt h \circ \wt \eta^{-2k}$ preserves every orbit of the flow, for some $k\neq 0$, when we assumed that $\wt h$ moved every lifts of periodic orbits. The other possibility was that $\wt h$ fixes every lifts of periodic orbits, which implies that $\wt h \circ \wt \eta^{0}$ preserves every orbit of the flow. So, in either case, we have that $\wt h \circ \wt \eta^{-2k}$ preserves every orbit of the flow for some $k\in \Z$.

Now, using \cite[Theorem B]{BartFe15}, we conclude that either $\eta^{i} \neq \id$ for any $i$, or there exists a minimal, strictly positive integer, $q$ such that $\eta^{2q} = \id$, in which case, $\flot$ is orbit equivalent to a $q$-cover of the geodesic flow of a hyperbolic surface or orbifold. Renaming $\eta^2$ by $\eta$, and $k$ by $-k$ we get the exact statement of Theorem \ref{prop_orbit_eq}.
\end{proof}

\section{Proof of Corollary \ref{cor_FG} for $\R$-covered fiberwise flows} \label{sec:proof_for_R_covered}

We will need to use the following homotopy-implies-isotopy result of Calegari based on a homotopy-implies-isotopy result of Gabai-Kazez.

\begin{theorem}[Corollary~5.3.12 of~\cite{Cal00}, \cite{GK97}]\label{cor:Calegari}
If a 3-dimensional manifold $M$ admits an $\R$-covered, transversely orientable, foliation then any homeomorphism $\eta\colon M\to M$ homotopic to the identity is isotopic to the identity.
\end{theorem}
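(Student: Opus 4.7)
My plan is to combine the homotopy-implies-isotopy machinery of Gabai--Kazez~\cite{GK97} with the rigidity of the $\R$-parametrized leaf space of $\mathcal F$. The strategy is first to find a foliation-preserving representative in the homotopy class of $\eta$, and then to use the $\R$-covered structure to deform that representative to the identity through an isotopy.

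For the first step, I would appeal to Gabai--Kazez's theorem that in an irreducible 3-manifold equipped with an essential lamination, any homotopy equivalence is homotopic to a lamination-preserving homeomorphism. An $\R$-covered foliation is taut, hence essential in the Gabai--Oertel sense, and the ambient 3-manifold is automatically irreducible. Since $\eta\simeq\id$ and $\id$ trivially preserves $\mathcal F$, this yields an $\mathcal F$-preserving homeomorphism $\eta'$ in the homotopy class of $\eta$. The remaining task is twofold: to show that $\eta'$ is isotopic to the identity, and to upgrade the homotopy between $\eta$ and $\eta'$ to an isotopy. It is precisely this upgrade that is carried out in Calegari~\cite{Cal00} using the $\R$-covered hypothesis.

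For the second step, lift $\eta'$ to $\tilde\eta'\colon\wt M\to\wt M$ commuting with $\pi_1(M)$; because $\eta'$ preserves $\mathcal F$, the lift descends to a $\pi_1(M)$-equivariant self-homeomorphism $\bar\eta$ of the leaf space $\R$. The homotopy $\eta'\simeq\id$ gives an equivariant homotopy $\bar\eta\simeq\id_{\R}$, and any orientation-preserving homeomorphism of $\R$ equivariantly homotopic to $\id$ is equivariantly isotopic to $\id$ by straight-line interpolation. Lifting back gives an equivariant isotopy from $\tilde\eta'$ to a map preserving every leaf of $\hfs$ (using $\mathcal F$ for the foliation here). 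On each leaf, which is a surface, the restriction is homotopic to the identity and hence isotopic to $\id$ by Epstein's classical surface theorem. These leaf-wise isotopies can be assembled into an equivariant global isotopy using the canonical parametrization of the transverse direction by $\R$, which descends to the required isotopy in $M$.

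The main obstacle is the Gabai--Kazez straightening step, since producing an $\mathcal F$-preserving representative from an arbitrary homeomorphism homotopic to $\id$ requires the full strength of essential lamination theory and delicate branched-surface surgery. Once this foliation-preserving representative is in hand, the linearity of the leaf space and the order-preserving nature of the $\pi_1(M)$-action make the subsequent construction essentially formal. This is the feature that distinguishes the $\R$-covered case and lets the conclusion be drawn without invoking atoroidality or a full geometric decomposition of $M$.
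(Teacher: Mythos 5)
First, note that the paper does not prove this statement at all: it is imported verbatim as Corollary~5.3.12 of Calegari~\cite{Cal00} (which in turn rests on Gabai--Kazez~\cite{GK97}), so the only fair comparison is between your sketch and the actual proof in those references --- and there your outline has genuine gaps. The key misstep is in how you invoke Gabai--Kazez. Their homotopy-implies-isotopy theorems concern \emph{genuine} laminations, i.e.\ essential laminations some of whose complementary regions are not interval bundles; a foliation has no complementary regions and is therefore never genuine, so you cannot feed $\mathcal F$ itself into their machinery. Moreover, their result is not of the form ``any homotopy equivalence is homotopic to a lamination-preserving homeomorphism''; it is directly a homotopy-implies-isotopy statement for (atoroidal) manifolds carrying a genuine lamination. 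This is exactly why Calegari's argument is nontrivial: in the non-uniform case with hyperbolic leaves he \emph{constructs} a pair of genuine laminations transverse to the $\R$-covered foliation (via the universal circle and the regulating dynamics) and only then applies Gabai--Kazez, while the remaining cases (uniform foliations, leaves of non-negative curvature) are reduced to Seifert-fibered or solvable manifolds where classical results of Waldhausen type give the conclusion. Your step~1, producing an $\mathcal F$-preserving representative of the homotopy class, is not a theorem in the literature and is not how the proof proceeds.

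Your second step also does not work as stated. The straight-line interpolation $H_t(x)=(1-t)x+t\bar\eta(x)$ on the leaf space $\R$ is \emph{not} $\pi_1(M)$-equivariant: deck transformations act by arbitrary orientation-preserving homeomorphisms of $\R$, not affine maps, so $H_t(\gamma x)\neq\gamma H_t(x)$ in general, and obtaining an equivariant isotopy at the level of the leaf space is already delicate. Worse, the leaves of $\mathcal F$ are non-compact (and typically dense in $M$), so Epstein's theorem for compact surfaces does not apply leafwise, and assembling leafwise isotopies into a single continuous ambient isotopy --- with leaves accumulating on one another --- is precisely the hard analytic/topological content of the theorem, not a formality that follows from the linear order on the leaf space. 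So the proposal identifies the right ingredients (Gabai--Kazez plus the $\R$-covered structure) but misplaces where the difficulty lies and invokes the Gabai--Kazez input in a form in which it is not available.
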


The only step in which assumption~\ref{assumption2} of Theorem 4.2~\cite{FG16} is used, is to show that if $h \colon M \rightarrow M$ is a self orbit equivalence of a transitive Anosov flow, homotopic to the identity, then $h$ preserves every orbit. 
Therefore, by Theorem~\ref{prop_orbit_eq}, to establish Corollary~\ref{cor_FG} we only have to deal with the case~(\ref{it_Rcovered_trans}): When the Anosov flow in the fiber is $\R$-covered with transversely oriented stable and unstable foliations and not a suspension flow of an Anosov diffeomorphism or a geodesic flow on a hyperbolic surface or orbifold. In particular, we can use Theorem \ref{cor:Calegari} in this case.

Recall that the proof of Theorem~4.2 of~\cite{FG16} is based on reducing the structure group of $p\colon E\to X$ from $\text{Top}(M)$ to a special subgroup $F<\text{Top}(M)$, which comes from using structural stability of Anosov flows. Recall that group $F$ consists of all homeomorphisms $\varphi\colon M\to M$ such that
\begin{enumerate}
\item[P1] $\varphi\colon M\to M$ is a self orbit equivalence of $g^t\colon M\to M$, where $g^t$ is the Anosov flow on a ``base-point" fiber $M=M_{x_0}$;
\item[P2] $\varphi$ is homotopic to identity $\id$;
\item[P3] $\varphi$ is bi-H\"older continuous;
\item[P4] $\varphi$ is differentiable along the orbits of the flow with a H\"older continuous derivative.
\end{enumerate}

Then by Theorem~\ref{prop_orbit_eq}, for each $\varphi\in F$ there exists a unique  $k\in\F$ such that $\varphi\circ \eta^k$ preserves the orbits of $g^t$. Here the group $\F$ is either $\Z$ or $\Z/q\Z$ according to whether we are in the case \ref{subitem_infinite_order} or \ref{subitem_finite_order} of Theorem~\ref{prop_orbit_eq}. Hence, we can decompose $F$ as the disjoint union
$$
F=\bigcup_{k\in\F}F_k,
$$
where $F_k$ consists of $\varphi\in F$ for which $\varphi\circ\eta^k$ preserves the orbits of $g^t$. It is well known that $\eta$ is H\"older continuous and can be chosen to be uniformly smooth along the orbits of $g^t$ \cite[Theorem 19.1.5]{KH95}. A calculus exercise can show that a H\"older continuous orbit equivalence which is uniformly smooth along the orbits also must satisfy P4 and, hence, $\eta\in F$. It follows that
\begin{equation}
\label{eq_Fk}
F_k=F_0\circ \eta^k
\end{equation}


Because elements of $F_0$ preserve orbits of $g^t$, Lemma~4.11 of~\cite{FG16} applies to $F_0$ and provides a deformation retraction of $F_0$ to $\id$. Then the same deformation retraction can be used to retract $F_k$ to $\eta^k$ according to~(\ref{eq_Fk}). We conclude that we have further reduced the structure group to $\{\eta^k; k\in \F\}< F$, where, recall, $\F= \Z$ in the infinite order case and $\F= \Z/q\Z$ in the finite order case. Note that in the latter case it is crucial that $\eta^{q}=\id$ in order to obtain the finite subgroup via the deformation retraction.

To proceed with the proof we specialize to the case when the base space $X$ is a finite simplicial complex. By a general position argument, $X$ can be embedded in $\R^N$, for some large $N$, so that each simplex $\Delta\subset X$ is a linear simplex in $\R^N$. In particular, such embedding yields barycentric coordinates on all simplices. Clearly these coordinates are compatible, in the sense that the restriction of coordinates to a face of a simplex gives the barycentric coordinates in the face. 
We will first consider the infinite order case $\F=\Z$ and then the finite order case $\F= \Z/q\Z$.

\emph{Case 1:} $\F=\Z$.

Let $\mathcal U$ be a finite open cover of $X$ which locally trivializes the bundle, that is, $p^{-1}(U)\to U$, is trivial for all $U\in\mathcal U$. We also choose $\mathcal U$ so that each simplex of $X$ is covered by at least one chart and so that for $U, V\in\mathcal U$ the intersection $U\cap V$ is either empty or connected. Then, because we have reduced the structure group of $p\colon E\to X$ to $\{\eta^k; k\in \F\}$, the transition functions $U\cap V\times M\to U\cap V\times M$ have the form
\begin{equation}
\label{eq_transition}
(x, y)\mapsto (x, \eta^{k(U,V)}(y))
\end{equation}

Recall that by Theorem~\ref{cor:Calegari}, the homeomorphism $\eta$ is isotopic to identity. Denote the isotopy by $\eta^t$, $t\in [0,1]$, $\eta^0=\id$, $\eta^1=\eta$ and let
$$
\eta^{k+t}=\eta^t\circ\eta^k,  k\in\Z, t\in[0,1]
$$
In this way we have embedded the group $\{\eta^k; k\in \Z\}$ into a continuous family $\{\eta^t, t\in\mathbb R\}$. Note however that $\{\eta^t\}$ is not a group.

We will construct the topological trivialization $r\colon E\to M$ using charts from $\mathcal U$. Given a chart $U\in\mathcal U$ the restriction of $r$ to $p^{-1}(U)$ expressed in the $U$-chart will be denoted by $r^U$. By~(\ref{eq_transition}), if $U$ and $V$ overlap then
$$
r^V=r^U\circ\eta^{k(U,V)}
$$
We begin by trivializing over the vertices of $X$. For each vertex $v$ we pick a chart $U\ni v$ and let $r_v^U=\id$. Clearly, changing the chart to a chart $V\ni v$ results in $r_v^V=\eta^{k(U,V)}$. Now let $\Delta=[v_0, v_1, \ldots v_m]\subset X$ be a simplex. For each $x\in \Delta$ let $(t_0, t_1, \ldots t_m)$, $t_j\ge 0, \sum_jt_j=1$, be the barycentric coordinates of $x$. Pick a chart $U\supset \Delta$ and for all $x\in\Delta$ define
$$
r_x^U=\eta^{t_0k_0+t_1k_1+\ldots t_m k_m},
$$
where $k_j$ are given by $r_{v_j}^U=\eta^{k_j}$. To check that $r$ is well defined let $V$ be another chart which covers $\Delta$. Then $r_{v_j}^V=\eta^{k_j+k(U,V)}$ and, indeed,
$$
r_x^V=\eta^{t_0(k_0+k(U,V))+t_1(k_1+k(U,V))+\ldots t_m (k_m+k(U,V))}=\eta^{t_0k_0+t_1k_1+\ldots t_m k_m+k(U,V)}=r_x^U\circ\eta^{k(U,V)}.
$$
It remains to notice that homeomorphisms $r_x^U$ depend continuously on $x\in\Delta$ and, hence, $r_x$ depends continuously on $x\in X$. Thus $r$ provides the posited topological trivialization.

\emph{Case 2:} $\F=\Z/q\Z$, $q\ge 2$.

To prove this case, we will show that we can use the proof of Case 1, but in the universal cover $\tilde E$ of $E$ instead. We will use two things: First a specific knowledge of what the Anosov flow is when $\F=\Z/q\Z$. Second, we will explicitly use the (necessary) assumption that $p\colon E\to X$ is fiberwise homotopically trivial. (In the infinite order case this assumption is also used when reducing the structure group to $F$ \`a la~\cite{FG16}.)

Let $q\colon E\to M$ be a fiber homotopy trivialization, that is, $q$ is a continuous map such that the restrictions to the fibers $q_x\colon M_x\to M$, $x\in X$, are all homotopy equivalences. We can identify the abstract fiber $M$ with a fiber $M_{x_0}$ so that the map induced by $q_{x_0}\colon M_{x_0}\to M$ between the fundamental groups is the identity. Hence, on the fundamental group level, we have that $q_*\colon \pi_1(E)\to \pi_1(M)$ splits the exact sequence
$$
\pi_1(M)\to \pi_1(E)\to \pi_1(X)\to 0
$$
and, thus, $\pi_1(E)\simeq \pi_1(M)\times \pi_1(X)$. Therefore, we can consider the cover $\tilde E\to E$ corresponding to $0\times \pi_1(X)\subset \pi_1(M)\times \pi_1(X)$. The fiber bundle structure lifts and $\tilde E$ is the total space of the bundle
$$
\tilde M\to\tilde E\to X,
$$
where $\tilde M$ is the universal cover of $M$.

Recall that we have reduced the structure group of $E\to X$ to the finite cyclic group $\{\eta^k; k\in \Z/q\Z\}$. Accordingly, the structure group of $\tilde E\to X$ consists of lifts of $\eta^k$ to $\tilde M$ which are equivariantly homotopic to identity. Now we will explain that this structure group is infinite cyclic.

When $\eta^q=\id$, by \cite[Theorem B]{BartFe15}, the flow $\flot$ is orbit equivalent to a $q$-cover of the geodesic flow on the unit tangent bundle of a hyperbolic orbifold or surface. Given the construction of $\eta$ (see, e.g., \cite[Proposition 2.4]{BartFe15} and references therein), in this case, there exists a lift $\wt\eta$ of $\eta$ such that $\wt\eta^q = \gamma$, where $\gamma\in \pi_1(M)$ is the element representing a standard fiber of the Seifert fibration. 
Thus, since the center of $\pi_1(M)$ is generated by $\gamma$, all lifts of $\eta$ which are equivariantly homotopic to identity belong to the infinite cyclic group $\{\tilde\eta^k; k\in \Z\}$. 

Then, the proof proceeds as in Case 1, by constructing a fiberwise topological trivialization of $\tilde E\to X$ but using $\tilde\eta$ instead of $\eta$. Since the construction of such a trivialization is equivariant with respect to the action of $\pi_1(M)$ on $\tilde E$, it yields the posited topological trivialization of $E\to X$. This concludes the proof in Case 2.

%

Finally, it remains to address the case when $X$ is a closed manifold (there are many topological manifolds which do not admit triangulations). Recall that every closed manifold is an ANR (Absolute Neighborhood Retract) and hence is homotopy equivalent to a finite simplicial complex $\bar X$~\cite{W77}. Denote by $\alpha\colon \bar X\to X$ a homotopy equivalence and by $\beta\colon X\to \bar X$ its homotopy inverse. We can pull back $p\colon E\to X$ to a bundle $\alpha^*E\to\bar X$ using $\alpha$. This pull-back bundle $\alpha^*E\to\bar X$ is naturally equipped with a fiberwise Anosov flow by pulling back the fiberwise Anosov flow on $E\to X$. Therefore, by the above proof $\alpha^*E\to\bar X$ is topologically trivial. 

Now we can pull back using $\beta$ to obtain a bundle $\beta^*\alpha^*E\to X$ which is obviously topologically trivial. But this bundle is isomorphic to the original bundle $E\to X$ because $\alpha\circ\beta$ is homotopic to identity (and the isomorphism class of the bundle is determined by the homotopy class of the map). This completes the proof in the case when $X$ is a closed manifold as well.

\bibliographystyle{amsalpha} 
\bibliography{fiberwise_anosov}
\end{document}